\newcommand{\monthyear}[1]{%
  \def\@monthyear{\uppercase{#1}}}
\newcommand{\volnumber}[1]{%
  \def\@volnumber{\uppercase{#1}}}
\def\ps@plain{\ps@empty
  \def\@oddfoot{\@monthyear \hfil \thepage}%
  \def\@evenfoot{\thepage \hfil \@volnumber}}
\def\ps@firstpage{\ps@plain}
\def\ps@headings{\ps@empty
  \def\@evenhead{%
    \setTrue{runhead}%
    \def\thanks{\protect\thanks@warning}%
    \uppercase{Preprint}\hfil}%
  \def\@oddhead{%
    \setTrue{runhead}%
    \def\thanks{\protect\thanks@warning}%
    \hfill\uppercase{Clusters of Integers with Equal Total Stopping Times}}%
  \let\@mkboth\markboth
  \def\@evenfoot{%
    \thepage \hfil \@volnumber}%
  \def\@oddfoot{%
    \@monthyear \hfil \thepage}%
  }%
\newcommand{\Z}{{\mathbb Z}}
\theoremstyle{plain}
\numberwithin{equation}{section}
\newtheorem{thm}{Theorem}[section]
\newtheorem{theorem}[thm]{Theorem}
\newtheorem{lemma}[thm]{Lemma}
\newtheorem{example}[thm]{Example}
\newtheorem{proposition}[thm]{Proposition}
\newtheorem{corollary}[thm]{Corollary}
\newtheorem{remark}[thm]{Remark}
\begin{document}
\monthyear{November 2017}
\volnumber{55, 4}
\setcounter{page}{1}

\title{Clusters of Integers with Equal Total Stopping Times in the 3x + 1 Problem}
\author{Mark D. LaDue}
\address{Mathematics and Computer Science Faculty\\
         The Magnum Opus School\\
         Southlake, TX\\
         76092, USA}
\email{mladue@gatech.edu}

\begin{abstract}
The clustering of integers with equal total stopping times has long been observed in the
3x + 1 Problem, and a number of elementary results about it have been used repeatedly in
the literature \cite{pA2000, jL1985, jL2010}.  In this paper we introduce a simple
recursively defined function $C: \Z^+ \rightarrow \{0, 1\}$, and we use it to give a
necessary and sufficient condition for pairs of consecutive even and odd integers
to have trajectories which coincide after a specific pair-dependent number of steps.
Then we derive a number of standard total stopping time equalities,
including the ones in \cite{lG1985}, as well as several novel results.
\end{abstract}

\maketitle

\section{Introduction}\label{S:intro}
The \emph{3X + 1 Problem} may be stated concisely in terms of the function $T: \Z^+ \rightarrow \Z^+$.
We define
\[
   T(m) = 
   \begin{cases}
   m / 2 &\text{if $m$ is even;}\\
   (3m + 1) / 2 &\text{if $m$ is odd.}
   \end{cases}
\]
For each positive integer $k$ we define $T^k(m)$ to be the result of
composing $T$ with itself $k$ times and evaluating the resulting function at $m$.  By
convention we also define $T^0(m) = m$.

We further define the \emph{trajectory}
of $m \in \Z^+$ to be the sequence of values $(T^k(m))_{k \ge 0}$ and the \emph{parity sequence}
of $m \in \Z^+$ to be the sequence of values $(v_k(m))_{k \ge 0}$, where
\[
   v_k(m) = 
   \begin{cases}
   0 &\text{if $T^k(m)$ is even;}\\
   1 &\text{if $T^k(m)$ is odd.}
   \end{cases}
\]
A \emph{parity vector} of length $K$ for $m \in \Z^+$ is then defined to be the first $K$
entries of the parity sequence for $m$.

Let $F = \{m \in \Z^+ \mid T^k(m) = 1\text{ for some }k \ge 0\}$.  Since $T^n(2^n) = 1$, 
$2^n \in F$ for all $n \ge 0$.  If $m \in F$, then $T^k(m) = 1$ for some $k \ge 0$, and we
define the \emph{total stopping time} of $m$ to be 
\[
   \sigma_{\infty}(m) = \text{smallest element of }\{k \ge 0 \mid T^k(m) = 1\}.
\]
If $m \notin F$, we define $\sigma_{\infty}(m) = \infty$.  The \emph{Collatz Conjecture} asserts
that $F = \Z^+$, or equivalently, $\sigma_{\infty}(m)$ is finite for all $m \in \Z^+$, and
the \emph{3X + 1 Problem} is to determine the truth of this conjecture.

If a set of integers $\{m_{1}, m_{2}, \dots, m_{k}\}$ satisfies $\sigma_{\infty}(m_{i}) =
\sigma_{\infty}(m_{j})$ for $1 \le i, j \le k$, we say that these integers form a \emph{cluster}.
We shall be interested mainly in clusters of two and three consecutive integers, but our
definition places no restrictions on how clusters are formed.

Before we discuss previous results, a word about terminology is in order.  The astute reader
may have noticed that the papers by Garner \cite{lG1985} and Gao \cite{gG1993} both refer to
\emph{heights} in the 3X~ +~ 1 Problem.  It turns out that Garner's definition of height
(\cite{lG1985}, p.~57) is the same as our definition of the total stopping time, while Gao's
definition of height (\cite{gG1993}, p.~262) agrees with the authoritative definition of
Lagarias (\cite{jL2012}, p.~2), which is different.  Thus in what follows, when we discuss
Garner's work, we are justified in referring to total stopping times instead of heights.
This will allow us to avoid any confusion which may result from the conflicting definitions.

While results about clusters have appeared in previous work, very few papers have considered
the topic explicitly.  In \cite{pA2000} Andaloro gave a different, but equivalent, formulation
of the 3X~ +~ 1 Problem.  In the course of proving his theorems about sufficient sets, he included
a number of results about clusters of integers with equal total stopping times.  Since his function\\
$T: \Z^+ \rightarrow \Z^+$ differs from the standard one that we use, his total stopping times differ
from ours.  Consequently, his clusters differ from, but are related to, ours.

In \cite{lG1985} Garner used parity vectors (first introduced and studied by Terras in \cite{rT1976})
to reconstruct the trajectories of numbers and obtained sufficient conditions in order that two
consecutive numbers have equal total stopping times.  In \cite{gG1993} Gao described computational
results concerning long sequences of integers having identical heights.

Our approach differs from previous ones in its introduction of a recursively defined function
whose values determine which pairs of consecutive even and odd integers have trajectories that
coincide after a specific pair-dependent number of steps.  We will use our main theorem concerning
trajectories (Theorem~\ref{T:t2}) to derive a corresponding result about total stopping
times (Corollary~\ref{C:c2}), and this will imply a number of standard total stopping
time equalities, including the ones in \cite{lG1985}, as well as several novel results.

A simple example may serve to illustrate the power of this approach.

\begin{example}\label{E:e1}
Consider the pair of integers $m = 15$ and $m - 1 = 14$.  A simple calculation reveals that
$T^6(15) = 20 = T^6(14)$ and that $\sigma_{\infty}(15) = 12 = \sigma_{\infty}(14)$.
Writing $m = 2^p \cdot (2q + 1) - 1$, we find that $p = 4$ and $q = 0$, and we observe
that $T^{(p+2)}(m) = T^{(p+2)}(m - 1)$.  If we also write $m = 2n - 1$, then we find that $n = 8$,
and if we look ahead to the next section, we calculate $C(8) = C(2) = 1 - C(1) = 1$.
More generally, we may calculate that $m = 2^{2r} - 1$ yields $T^{(2r+2)}(m) = T^{(2r+2)}(m - 1)$
and $C(2^{(2r-1)}) = 1$.  The trajectories of $m$ and $m - 1$ coincide after the pair-dependent
value of $2r + 2$ steps.

On the other hand, if we consider $m = 31$ and $m - 1 = 30$, then we find $n = 16$, $C(16) = 0$,
and $p = 5$.  But $T^7(31) = 182 \neq 20 = T^7(30)$, and indeed
$\sigma_{\infty}(31) = 67 \neq 13 = \sigma_{\infty}(30)$.  So it appears that the value of
$C(n)$ is determining whether or not the trajectories of pairs of consecutive even and odd
integers coincide after a certain number of steps.  This is the content of Theorem~\ref{T:t2}.
\end{example}

\section{Definition and Properties of $C(n)$}\label{S:defs}
We begin by defining $C: \Z^+ \rightarrow \{0, 1\}$ as follows:
\[
   C(n) =
   \begin{cases}
   0 &\text{if $n = 1$;}\\
   1 - C(n - 2) &\text{if $n$ is odd;}\\
   1 - C(n / 2) &\text{if $n$ is even.}
   \end{cases}
\]
Then by induction $C$ is a well-defined function, and we may calculate the first few
values of $C(n)$ as follows: $C(2) = 1 - C(1) = 1$, $C(3) = 1 - C(1) = 1$, and
$C(4) = 1 - C(2) = 0$.  Additional values of $C(n)$ are readily found by using two
simple propositions.

\begin{proposition}\label{P:p1}
Consider the previously defined function $C(n)$.
\begin{enumerate}
\item $C(2n) = 1 - C(n)$, $C(4n) = C(n)$, $C(2n + 1) = 1 - C(2n - 1)$ , and $C(2n + 3) = C(2n - 1)$ for all $n \ge 1$.
\item Suppose that $n$ is odd, and write $n = 4t + u$, where $t \ge 0$ and $u = 1$ or $u = 3$.  Then $C(n) = C(u)$.
\end{enumerate}
\end{proposition}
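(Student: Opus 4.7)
The plan is to derive the four identities in part (1) directly from the recursive definition, then use the last one to prove part (2) by a short induction. Throughout I have to mind the base case $n = 1$ so that each application of the recursion is legal.

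For part (1), the identity $C(2n) = 1 - C(n)$ is simply the even branch of the definition applied to $2n$, which is legal for $n \ge 1$ since $2n \ge 2 > 1$. Then $C(4n) = C(n)$ follows by applying this identity twice: $C(4n) = 1 - C(2n) = 1 - (1 - C(n)) = C(n)$. Similarly, the odd branch applied to $2n+1 \ge 3$ gives $C(2n+1) = 1 - C(2n-1)$, and iterating yields $C(2n+3) = 1 - C(2n+1) = C(2n-1)$. All four identities thus reduce to one or two unfoldings of the definition.

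For part (2), I would induct on $t \ge 0$. The base case $t = 0$ is immediate: $n = u$, so $C(n) = C(u)$ trivially. For the inductive step, assume $C(4t + u) = C(u)$ and set $m = (4t + u + 1)/2$. Since $4t + u$ is a positive odd integer, $m$ is a positive integer with $2m - 1 = 4t + u$ and $2m + 3 = 4(t+1) + u$. Applying the fourth identity of part (1), $C(2m+3) = C(2m-1)$, gives $C(4(t+1)+u) = C(4t+u)$, which equals $C(u)$ by the inductive hypothesis.

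There is no real obstacle here; the entire proposition is an unpacking of the recursion. The only things to check carefully are that each invocation of the even or odd branch of the definition is applied to an argument greater than $1$, and that $m \ge 1$ in the inductive step (which holds since $4t + u \ge 1$). Once these small bookkeeping points are handled, both parts fall out in a few lines.
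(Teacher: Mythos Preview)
Your proof is correct and follows essentially the same approach as the paper, which simply states that (1) follows directly from the definition of $C(n)$ and that (2) is an immediate consequence of (1). You have supplied the routine details that the paper omits, including the careful check that each recursive unfolding is applied to an argument greater than $1$.
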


\begin{proof}
(1) follows directly from the definition of $C(n)$, while (2) is an immediate consequence of (1).
\end{proof}

Given $n \ge 1$, let $j$ be the largest non-negative integer such that $2^j | n$.  Then
$2^j$ divides $n$, but $2^{j + 1}$ does not divide $n$.  For this $j \ge 0$ we write $2^j
|| n$ and say that $2^j$ \emph{exactly divides} $n$, and we have $n = 2^j \cdot k$,
where $k \ge 1$ is odd.

\begin{proposition}\label{P:p2}
Suppose that $n \ge 1$ and $2^j || n$, and write $n = 2^j \cdot k$, where $j \ge 0$ and $k
\ge 1$ is odd.
\begin{enumerate}
\item If $j$ is even, then $C(n) = C(k)$.
\item If $j$ is odd, then $C(n) = C(2k) = 1 - C(k)$.
\end{enumerate}
\end{proposition}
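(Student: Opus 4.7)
The plan is a straightforward induction on $j$, leveraging the identity $C(4n) = C(n)$ from Proposition~\ref{P:p1}(1) to strip off powers of $4$ from $n$ two at a time.

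First I would split into the two cases based on the parity of $j$ and reduce each to a single pattern. If $j = 2s$ is even, write $n = 4^s \cdot k$; if $j = 2s+1$ is odd, write $n = 4^s \cdot (2k)$. In both forms the outermost factor is a power of $4$, which is exactly the shape that $C(4m) = C(m)$ is designed to collapse.

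Next I would do the induction on $s$. The base case $s = 0$ is immediate: in the even-$j$ case $n = k$ gives $C(n) = C(k)$ trivially, and in the odd-$j$ case $n = 2k$ gives $C(n) = C(2k) = 1 - C(k)$ by the identity $C(2n) = 1 - C(n)$ from Proposition~\ref{P:p1}(1). For the inductive step I would apply $C(4m) = C(m)$ once, with $m = 4^{s-1}k$ or $m = 4^{s-1}(2k)$ as appropriate; this peels off one factor of $4$ and lands me in the inductive hypothesis at level $s - 1$. Chaining these gives $C(4^s k) = C(k)$ and $C(4^s \cdot 2k) = C(2k) = 1 - C(k)$, which are exactly statements (1) and (2).

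I do not anticipate any real obstacle here. The only thing to be slightly careful about is that the identity $C(4m) = C(m)$ in Proposition~\ref{P:p1}(1) requires $m \ge 1$, which is automatic since $k \ge 1$ is odd. Proposition~\ref{P:p1}(1) is already proved, so both the base-case identity $C(2k) = 1 - C(k)$ and the inductive identity $C(4m) = C(m)$ are available as black boxes, making the whole argument essentially a two-line induction once the exponent $j$ is rewritten in the form $2s$ or $2s+1$.
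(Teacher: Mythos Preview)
Your proof is correct and is essentially the same approach as the paper's: the paper simply says both parts ``follow at once from Proposition~\ref{P:p1}(1),'' and your induction on $s$ using $C(4m)=C(m)$ and $C(2k)=1-C(k)$ is exactly the natural unpacking of that one-line justification.
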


\begin{proof}
Both of these results follow at once from Proposition~\ref{P:p1}(1).
\end{proof}

Now the following proposition describes precisely when $C(n)$ assumes the values $0$ and $1$.

\begin{proposition}\label{P:p3}
Suppose that $n \ge 1$ and $2^j || n$, and write $n = 2^j \cdot k$, where $j \ge 0$ and $k
\ge 1$ is odd.  Further, write $j = 2r + s$, where $r \ge 0$ and $s = 0$ or $s = 1$, and
write $k = 4t + u$, where $t \ge 0$ and $u = 1$ or $u = 3$.
\begin{enumerate}
\item C(n) = 1 if and only if s = 0 and u = 3 or s = 1 and u = 1.
\item C(n) = 0 if and only if s = 0 and u = 1 or s = 1 and u = 3.
\end{enumerate}
\end{proposition}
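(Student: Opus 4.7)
The plan is to chain together the two reduction steps given by the previous propositions until $C(n)$ is expressed in terms of $C(1)$ or $C(3)$, which are explicitly $0$ and $1$, respectively. The four-way case split in the statement corresponds exactly to the four combinations of $(s, u)$, so the argument will naturally split into those four cases after the reductions are performed.

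First I would apply Proposition~\ref{P:p2}: since $2^j \| n$ with $j = 2r + s$, the parity of $j$ is recorded by $s$, and the proposition yields $C(n) = C(k)$ when $s = 0$ and $C(n) = 1 - C(k)$ when $s = 1$. This eliminates the power-of-two factor entirely and reduces the problem to computing $C$ on an odd integer $k$.

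Next, I would invoke Proposition~\ref{P:p1}(2) on the odd integer $k = 4t + u$ to get $C(k) = C(u)$. Since $u \in \{1, 3\}$, and since the defining recursion gives $C(1) = 0$ directly and $C(3) = 1 - C(1) = 1$, this determines $C(k)$ outright: $C(k) = 0$ if $u = 1$, and $C(k) = 1$ if $u = 3$.

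Finally, I would combine the two steps in a small table of four cases: $(s=0, u=1)$ gives $C(n) = C(k) = 0$; $(s=0, u=3)$ gives $C(n) = C(k) = 1$; $(s=1, u=1)$ gives $C(n) = 1 - C(k) = 1$; and $(s=1, u=3)$ gives $C(n) = 1 - C(k) = 0$. Reading off the cases where $C(n) = 1$ yields part (1), and reading off the cases where $C(n) = 0$ yields part (2). There is really no serious obstacle here — the propositions already stated do all the heavy lifting — so the only thing to be careful about is making sure the case analysis on $(s,u)$ is presented cleanly and that the reader sees both directions of each biconditional (which follows automatically from the fact that the four $(s,u)$-cases are exhaustive and mutually exclusive).
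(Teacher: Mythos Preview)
Your proposal is correct and follows essentially the same route as the paper: both apply Proposition~\ref{P:p2} to reduce $C(n)$ to $C(k)$ or $1-C(k)$ according to the parity $s$ of $j$, then use Proposition~\ref{P:p1}(2) to reduce $C(k)$ to $C(u)\in\{C(1),C(3)\}=\{0,1\}$, and finish by reading off the four $(s,u)$ cases. Your explicit remark that the biconditionals follow because the four cases are exhaustive and mutually exclusive is a nice touch that the paper leaves implicit.
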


\begin{proof}
We consider the value of $C(n)$ in four mutually exclusive and exhaustive cases:
(1) $s = 0$ and $u = 1$; (2) $s = 0$ and $u = 3$; (3) $s = 1$ and $u = 1$; and (4) $s = 1$
and $u = 3$.

If $s = 0$, then $n = 4^r \cdot (4t + u)$, and so $C(n) = C(4t + u) = C(u)$ by Proposition~
\ref{P:p2}(1) and Proposition~\ref{P:p1}(2).  Thus $C(n) = 0$ when $u = 1$, and $C(n) = 1$
when $u = 3$, which handles cases (1) and (2).

If $s = 1$, then $n = 4^r \cdot 2 \cdot (4t + u)$, and so $C(n) = C(2 \cdot (4t + u)) = 1 -
C(4t + u) = 1 - C(u)$ by Proposition~\ref{P:p2}(2) and Proposition~\ref{P:p1}(2).  So
$C(n) = 1$ when $u = 1$, and $C(n) = 0$ when $u = 3$, which takes care of cases (3) and (4)
and completes the proof.
\end{proof}

Our next task will be to establish a connection between the functions $C$ and $T$.  Once
this has been accomplished, we shall be in a position to prove results about clusters of
of integers with equal total stopping times.

\section{Connecting C and T}\label{S:connect}
To establish that connection we need to be able to find the iterates of integers under $T$.
When the integers are written in the proper form, this becomes a straightforward
calculation.  In this section and the following one, we assume that $m$ is an odd positive
integer with $m = 2n - 1$, where $n \ge 2$, and we write $m = 2^p \cdot (2q + 1) - 1$,
where $2^p || (m + 1)$, $p \ge 1$, and $q \ge 0$.

\begin{lemma}\label{L:l1}
The iterates of $m$ and $m - 1$ under $T$ have the following properties:
\begin{enumerate}
\item For $0 \le i \le p$ we have $T^i(m) = 3^i \cdot 2^{p-i} \cdot (2q + 1) - 1$.  In
particular, $T^p(m) = 3^p \cdot (2q + 1) - 1$.
\item $T(m - 1) = 2^{p-1} \cdot (2q + 1) - 1$, and for $1 \le i \le p$ we have $T^i(m - 1)
= 3^{i-1} \cdot 2^{p-i} \cdot (2q + 1) - 1$.  In particular, $T^p(m - 1) = 3^{p-1} \cdot
(2q + 1) - 1$.
\item $T^p(m) = 3 \cdot T^p(m - 1) + 2$.
\item $T^p(m - 1) \equiv 2$ (mod $4$) if and only if $T^p(m) \equiv 0$ (mod $4$).
\item $T^p(m - 1) \equiv 0$ (mod $4$) if and only if $T^p(m) \equiv 2$ (mod $4$).
\end{enumerate}
\end{lemma}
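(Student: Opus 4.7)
The plan is to establish parts (1) and (2) simultaneously by a single induction on $i$, then read off (3) as a direct algebraic consequence of the closed forms, and finally handle (4) and (5) together using (3) and a short mod-$4$ calculation.

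For part (1), I would first check the base case $i=0$, which just restates $m = 2^{p}(2q+1)-1$. The key observation that drives the induction is that for $0\le i<p$ the integer $T^{i}(m) = 3^{i}\cdot 2^{p-i}(2q+1)-1$ is \emph{odd}, since $p-i\ge 1$ makes $3^{i}\cdot 2^{p-i}(2q+1)$ even. Hence the odd branch of $T$ applies, and
\[
T^{i+1}(m) = \frac{3\bigl(3^{i}\cdot 2^{p-i}(2q+1)-1\bigr)+1}{2}
           = 3^{i+1}\cdot 2^{p-i-1}(2q+1)-1,
\]
which is exactly the claimed formula at step $i+1$. Setting $i=p$ gives $T^{p}(m)=3^{p}(2q+1)-1$. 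For part (2), the critical first step is that $m-1 = 2\bigl(2^{p-1}(2q+1)-1\bigr)$ is even, so $T(m-1)=2^{p-1}(2q+1)-1$, which matches the proposed formula at $i=1$. From there the induction runs exactly as in part (1): for $1\le i<p$ the quantity $3^{i-1}\cdot 2^{p-i}(2q+1)-1$ is odd, so the odd branch of $T$ again produces the claimed value at step $i+1$.

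Part (3) is then a one-line algebraic check: substitute the closed forms from (1) and (2) to see that both sides equal $3^{p}(2q+1)-1$. For parts (4) and (5), the formula $T^{p}(m-1) = 3^{p-1}(2q+1)-1$ shows that $T^{p}(m-1)$ is always even, so it is congruent to either $0$ or $2$ modulo $4$. Using (3), $T^{p}(m)\equiv 3\,T^{p}(m-1)+2\pmod 4$, and since $3\equiv -1\pmod 4$, the residue $0$ for $T^{p}(m-1)$ forces $T^{p}(m)\equiv 2\pmod 4$, while the residue $2$ forces $T^{p}(m)\equiv 0\pmod 4$. Both biconditionals (4) and (5) follow immediately.

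I do not anticipate a real obstacle here; the whole lemma is a careful unwinding of the definition of $T$ on numbers written as $2^{a}\cdot(\text{odd})-1$. The one place to be attentive is the parity bookkeeping: one must verify at each inductive step that $p-i\ge 1$ genuinely makes the expression odd, so that the odd branch of $T$ is indeed the correct branch to apply. The restriction to $i\le p$ in the statements of (1) and (2) is precisely what makes this induction go through without interruption.
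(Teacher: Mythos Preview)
Your proposal is correct and follows essentially the same argument as the paper: the paper packages your inductive step as the single observation $T(2x-1)=3x-1$ and then says ``apply this $i$ times,'' while you write the induction out explicitly and track the parity condition $p-i\ge 1$ that justifies using the odd branch. Parts (3)--(5) are handled identically via the algebraic identity and the mod-$4$ reduction $T^{p}(m)\equiv 2-T^{p}(m-1)\pmod 4$.
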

\begin{proof}
We first observe that for any $x \ge 1$ we have $T(2x - 1) = 3x - 1$.
\begin{enumerate}
\item Applying this observation $i$ times to the given $m$ yields the desired result,
and setting $i = p$ then gives the particular formula for $T^p(m)$.
\item Since $m - 1 = 2 \cdot [2^{p-1} \cdot (2q + 1) - 1]$, we see that $T(m - 1) = 2^{p-1}
\cdot (2q + 1) - 1$.  Applying our observation to this expression $i - 1$ times gives the
formula for $T^i(m - 1)$, and when $i = p$, we obtain the stated conclusion.
\item It now follows that $T^p(m) = 3^p \cdot (2q + 1) - 1 = 3 \cdot [3^{p-1} \cdot (2q +
1) - 1] + 2 = 3 \cdot T^p(m - 1) + 2$.
\item From parts (1) and (2) we see that $T^p(m)$ and $T^p(m - 1)$ are both even, and so
each must be congruent to either 0 or 2 modulo 4.  From part (3) we see that $T^p(m) \equiv
2 - T^p(m - 1)$ (mod $4$).  Now both (4) and (5) follow from this.
\end{enumerate}
\end{proof}

Our next result shows that the function $C$ determines whether $T^p(m)$ is congruent to 0
or 2 modulo 4.  This will turn out to be the key to finding clusters of integers with equal
total stopping times.

\begin{theorem}\label{T:t1}
The following statements are equivalent:
\begin{enumerate}
\item $T^p(m) \equiv 0$ (mod $4$);
\item p and q have the same parity (i.e., they are both odd or both even);
\item $C(n) = 1$.
\end{enumerate}
\end{theorem}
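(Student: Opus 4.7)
The plan is to prove the equivalences in a cycle, splitting the argument into two self-contained pieces: first $(1)\iff(2)$, which is a direct congruence calculation using the explicit formula from Lemma~\ref{L:l1}; then $(2)\iff(3)$, which reduces to an application of Proposition~\ref{P:p3} once $n$ is written in the appropriate form.

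For $(1)\iff(2)$, I would start from Lemma~\ref{L:l1}(1), which gives $T^p(m)=3^p(2q+1)-1$, and reduce modulo $4$. Using $3\equiv -1\pmod 4$, the quantity $3^p(2q+1)-1$ reduces to $(-1)^p(2q+1)-1\pmod 4$. A short split on the parity of $p$ shows that in both cases the result is $\equiv 0\pmod 4$ precisely when $p$ and $q$ share parity. (When $p$ is even, $T^p(m)\equiv 2q\pmod 4$; when $p$ is odd, $T^p(m)\equiv 2-2q\pmod 4$.)

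For $(2)\iff(3)$, I would translate $m=2n-1=2^p(2q+1)-1$ into the relation
\[
n=2^{p-1}(2q+1),
\]
so that in the notation of Proposition~\ref{P:p3} we have $j=p-1$ and $k=2q+1$. Writing $j=2r+s$ with $s\in\{0,1\}$ and $k=4t+u$ with $u\in\{1,3\}$, the parity of $p$ is the opposite of the parity of $s$, and $u=1$ or $u=3$ according to whether $q$ is even or odd. Proposition~\ref{P:p3}(1) then says $C(n)=1$ iff $(s,u)=(0,3)$ or $(s,u)=(1,1)$, which in terms of $p$ and $q$ is exactly the assertion that $p$ is odd and $q$ is odd, or $p$ is even and $q$ is even. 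So $C(n)=1$ iff $p$ and $q$ have the same parity.

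The only place that requires care is the bookkeeping in the second equivalence: $p-1$ being even corresponds to $p$ being odd, which inverts the match between $s$ and the parity of $p$, so one has to be careful not to get the pairing backwards when reading off Proposition~\ref{P:p3}. Apart from this, both halves of the proof are routine; I do not anticipate a genuine obstacle, since Lemma~\ref{L:l1} and Proposition~\ref{P:p3} already do all of the substantive work.
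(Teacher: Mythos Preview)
Your proposal is correct and uses essentially the same ingredients as the paper: Lemma~\ref{L:l1}(1) together with $3\equiv -1\pmod 4$ for the link between (1) and (2), and the identification $n=2^{p-1}(2q+1)$ (so $j=p-1$, $k=2q+1$) followed by Proposition~\ref{P:p3} for the link between (2) and (3). The only difference is organizational: the paper proves $(2)\Rightarrow(1)$ and $(2)\Rightarrow(3)$ simultaneously in the two same-parity cases, and then proves $\neg(2)\Rightarrow\neg(1)$ and $\neg(2)\Rightarrow\neg(3)$ in the two opposite-parity cases, whereas you cleanly separate the argument into $(1)\iff(2)$ and $(2)\iff(3)$; the underlying computations are identical.
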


\begin{proof}
First note that $n = 2^j \cdot k$, where $j = p - 1 \ge 0$, $2^j || n$, and $k = 2q + 1$.
We also write $j = 2r + s$, where $r \ge 0$ and $s = 0$ or $s = 1$, and $k = 4t + u$, where
$t \ge 0$ and $u = 1$ or $u = 3$.

(2) $\Rightarrow$ (1) and (2) $\Rightarrow$ (3)
Suppose first that $p = 2x$ and $q = 2y$ are both even.  Then $j = 2x - 1$ is odd, so that
$s = 1$, and $k = 4y + 1$, so that $u = 1$.  Thus $C(n) = 1$ by Proposition~\ref{P:p3}(1),
and (3) holds.  Now $3^p \equiv 1$ (mod $4$) and $2q + 1 = 1$ (mod $4$), and so by Lemma~
\ref{L:l1}(1) $T^p(m) = 3^p \cdot (2q + 1) - 1 \equiv 0$ (mod $4$), which means that (1) holds.

Next suppose that $p = 2x + 1$ and $q = 2y + 1$ are both odd.  Then $j = 2x$ is even, which
implies that $s = 0$, and $k = 4y + 3$, which makes $u = 3$.  So $C(n) = 1$ by Proposition~
\ref{P:p3}(1), and once again (3) holds.  Now $3^p \equiv -1$ (mod $4$) and $2q + 1 \equiv
-1$ (mod $4$), and so $T^p(m) = 3^p \cdot (2q + 1) - 1 \equiv 0$ (mod $4$), which shows that
(1) holds.

(1) $\Rightarrow$ (2) and (3) $\Rightarrow$ (2)
Assume now that $p$ and $q$ do not have the same parity.  If $p = 2x$ is even and $q = 2y
+ 1$ is odd, then $3^p \equiv 1$ (mod $4$) and $2q + 1 \equiv -1$ (mod $4$), and so $T^p(m)
= 3^p \cdot (2q + 1) - 1 \equiv 2$ (mod $4$).  Thus (1) does not hold.  Since $j = 2x - 1$
is odd, we have $s = 1$, and since $k = 4y + 3$, we have $u = 3$.  So $C(n) = 0$ by
Proposition~\ref{P:p3}(2), and (3) does not hold in this case.

Finally, suppose that $p = 2x + 1$ is odd and $q = 2y$ is even.  Then $3^p \equiv -1$ (mod
$4$) and $2q + 1 \equiv 1$ (mod $4$), and again $T^p(m) = 3^p \cdot (2q + 1) - 1 \equiv 2$
(mod $4$).  So (1) does not hold.  Since $j = 2x$ is even, we see that $s = 0$, and since
$k = 4y + 1$, we have $u = 1$.  Once again $C(n) = 0$ by Proposition~\ref{P:p3}(2), and (3)
does not hold.  This completes the proof.
\end{proof}

As a useful corollary we may restate this as follows.

\begin{corollary}\label{C:c1}
The following statements are equivalent:
\begin{enumerate}
\item $T^p(m) \equiv 2$ (mod $4$);
\item $p$ and $q$ have opposite parities;
\item $C(n) = 0$.
\end{enumerate}
\end{corollary}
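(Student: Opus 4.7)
The plan is to derive this corollary directly from Theorem~\ref{T:t1} by negating each of the three equivalent statements. The key observation is that each statement takes only two possible values, so negation is unambiguous.

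First I would note that by Lemma~\ref{L:l1}(1), $T^p(m) = 3^p(2q+1) - 1$, which is even (since $3^p(2q+1)$ is odd). Hence $T^p(m)$ must be congruent to either $0$ or $2$ modulo $4$, so statement (1) of the corollary is exactly the negation of statement (1) of Theorem~\ref{T:t1}. Similarly, since every integer is either even or odd, statement (2) here (that $p$ and $q$ have opposite parities) is the negation of statement (2) there. Finally, since $C: \Z^+ \to \{0,1\}$ takes only two values by definition, statement (3) here is the negation of statement (3) there.

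Because Theorem~\ref{T:t1} establishes that statements (1), (2), (3) of that theorem are pairwise equivalent, their negations are also pairwise equivalent. In view of the preceding observations, this yields the equivalence of the three statements of Corollary~\ref{C:c1}.

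I do not anticipate any significant obstacle; the only subtlety worth mentioning explicitly is the appeal to Lemma~\ref{L:l1}(1) to rule out the possibility that $T^p(m)$ could be odd, which is what makes the negation of ``$T^p(m) \equiv 0 \pmod 4$'' equivalent to ``$T^p(m) \equiv 2 \pmod 4$'' rather than merely to ``$T^p(m) \not\equiv 0 \pmod 4$''. One could alternatively rerun the four-case analysis from the proof of Theorem~\ref{T:t1}, reading off the values in the two cases where $p$ and $q$ have opposite parities, but invoking the theorem directly is cleaner.
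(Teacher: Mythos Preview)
Your proposal is correct and matches the paper's approach: the paper simply presents Corollary~\ref{C:c1} as a restatement of Theorem~\ref{T:t1} without further argument, and your explicit negation argument (including the appeal to Lemma~\ref{L:l1}(1) to ensure $T^p(m)$ is even) is exactly the justification that the paper leaves implicit.
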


\section{Clusters of Consecutive Integers}\label{S:clusters}
We turn now to our main result, which will be the basis for the determination of clusters.
We defined the relationships assumed to hold for $m$, $n$, and $p$ at the beginning of the
previous section.  Note that $p + 2$ is the pair-dependent number of steps at which
the trajectories of $m - 1$ and $m$ first coincide.

\begin{theorem}\label{T:t2}
$T^{p+2}(m - 1) = T^{p+2}(m)$ if and only if $C(n) = 1$.
\end{theorem}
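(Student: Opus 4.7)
The plan is to convert the hypothesis on $C(n)$ into a congruence on $T^p(m)$ and $T^p(m-1)$ modulo $4$ using Theorem~\ref{T:t1} and Corollary~\ref{C:c1}, then explicitly compute two more iterates of $T$ in each case and compare.

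First I would invoke Theorem~\ref{T:t1} together with parts (4)--(5) of Lemma~\ref{L:l1} to record the following dichotomy: $C(n)=1$ is equivalent to $T^p(m) \equiv 0\pmod 4$ and $T^p(m-1) \equiv 2\pmod 4$, while $C(n)=0$ is equivalent to $T^p(m) \equiv 2\pmod 4$ and $T^p(m-1) \equiv 0\pmod 4$. In either case both $T^p(m)$ and $T^p(m-1)$ are even (by Lemma~\ref{L:l1}(1,2)), and the relation $T^p(m) = 3T^p(m-1) + 2$ from Lemma~\ref{L:l1}(3) lets me parametrize both values simultaneously.

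For the forward direction, assume $C(n)=1$ and write $T^p(m-1) = 2a$ with $a$ odd (since $T^p(m-1)\equiv 2\pmod 4$). Then $T^p(m) = 6a+2 = 2(3a+1)$, where $3a+1$ is even because $a$ is odd. Applying $T$ once to each: $T^{p+1}(m-1) = a$, which is odd, so $T^{p+2}(m-1) = (3a+1)/2$; meanwhile $T^{p+1}(m) = 3a+1$, which is even, so $T^{p+2}(m) = (3a+1)/2$. The two values agree.

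For the converse, assume $C(n)=0$ and write $T^p(m-1) = 4b$ with $b \ge 1$; then $T^p(m) = 12b+2 = 2(6b+1)$, and $6b+1$ is odd. Applying $T$ twice: $T^{p+1}(m-1) = 2b$ (even), hence $T^{p+2}(m-1) = b$, while $T^{p+1}(m) = 6b+1$ (odd), hence $T^{p+2}(m) = (3(6b+1)+1)/2 = 9b+2$. Since $9b+2 > b$ for every $b \ge 1$, the iterates cannot coincide, which gives the contrapositive. I do not expect any real obstacle here; the work is entirely in bookkeeping the parities at steps $p+1$ and $p+2$, and the content of the theorem is really packaged into Theorem~\ref{T:t1}.
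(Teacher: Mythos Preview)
Your proof is correct and follows essentially the same route as the paper's: invoke Theorem~\ref{T:t1}/Corollary~\ref{C:c1} and Lemma~\ref{L:l1}(3)--(5) to pin down the residues of $T^p(m)$ and $T^p(m-1)$ modulo $4$, then compute two further iterates explicitly and compare. The only cosmetic difference is that you introduce the auxiliary parameters $a$ and $b$, whereas the paper works directly with $T^p(m-1)$ and derives the contradiction $T^p(m-1)=-1$ in the $C(n)=0$ case; both arguments are the same in substance.
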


\begin{proof}
First suppose that $C(n) = 1$.  Then $T^p(m) \equiv 0$ (mod $4$) by Theorem~
\ref{T:t1}, and so by Lemma~\ref{L:l1}(4) we have $T^p(m - 1) \equiv 2$ (mod $4$).  It
follows that both $T^p(m)$ and $T^p(m - 1)$ are even, while $T^{p+1}(m) = T^p(m) / 2$ is
even and $T^{p+1}(m - 1) = T^p(m - 1) / 2$ is odd.  Hence $T^{p+2}(m) = T^p(m) / 4$, and
using Lemma~\ref{L:l1}(3) we find
\[
\begin{split}
T^{p+2}(m - 1) &= T(T^{p+1}(m - 1)) = T(T^p(m - 1) / 2) = (3 \cdot (T^p(m - 1) /2 ) + 1) / 2 \\
&= (3 \cdot (T^p(m - 1)) + 2) / 4 = T^p(m) / 4 \\
&= T^{p+2}(m).
\end{split}
\]

Now suppose that $C(n) = 0$.  Then by Corollary~\ref{C:c1} $T^p(m) \equiv 2$ (mod $4$), and
so by Lemma~\ref{L:l1}(5) $T^p(m - 1) \equiv 0$ (mod $4$).  So $T^{p+2}(m - 1) =
T^p(m - 1) / 4$, and we calculate
\[
\begin{split}
T^{p+2}(m) &= T(T^{p+1}(m)) = T(T^p(m) / 2) = (3 \cdot (T^p(m)/2) + 1) / 2 \\
&= (3 / 4)\cdot T^p(m) + 1 / 2 = (3 / 4) \cdot (3 \cdot T^p(m - 1) + 2) + 1 / 2 \\
&= (9 / 4) \cdot T^p(m - 1) + 2.
\end{split}
\]

Suppose now, to reach a contradiction, that $T^{p+2}(m - 1) = T^{p+2}(m)$.  Then
$T^p(m - 1) / 4 = (9 / 4) \cdot T^p(m - 1) + 2$, and this implies that $T^p(m - 1) = -1$, which
is impossible.  Hence we conclude that $T^{p+2}(m - 1) \neq T^{p+2}(m)$, and the proof is
complete.
\end{proof}

As a corollary we will obtain our first result about clusters.  The following lemma
explains why the restriction $n \ge 4$ is necessary in that corollary.

\begin{lemma}\label{L:l2}
Suppose that $C(n) = 1$.  Then $T^i(m - 1) = 1$ for some $i$ with $1 \le i \le p + 1$ if
and only if $n = 2$ or $n = 3$.
\end{lemma}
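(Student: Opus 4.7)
The plan is to combine the explicit formulas for $T^i(m-1)$ from Lemma~\ref{L:l1}(2) with the $2$-adic information about $T^p(m-1)$ supplied by Theorem~\ref{T:t1}, and to reduce the equation $T^i(m-1) = 1$ to a very small Diophantine problem.  I would split the range $1 \le i \le p+1$ into the two subranges $1 \le i \le p$ and $i = p+1$, and handle the reverse direction by direct computation.

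For the reverse direction, I would simply check the two claimed values.  When $n = 2$ we have $m = 3$, $p = 2$, $q = 0$, and $T(m-1) = T(2) = 1$, while $C(2) = 1$.  When $n = 3$ we have $m = 5$, $p = 1$, $q = 1$, and $T^2(m-1) = T^2(4) = 1 = T^{p+1}(m-1)$, while $C(3) = 1$.

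For the forward direction, assume $C(n) = 1$ and $T^i(m-1) = 1$ for some $1 \le i \le p+1$.  First I would treat $1 \le i \le p$: Lemma~\ref{L:l1}(2) gives
\[
T^i(m-1) = 3^{i-1}\cdot 2^{p-i}\cdot(2q+1) - 1,
\]
so the equation becomes $3^{i-1}\cdot 2^{p-i}\cdot(2q+1) = 2$.  Since $3^{i-1} \ge 3$ as soon as $i \ge 2$, this forces $i = 1$; then, since $2q+1$ is odd and divides $2$, we obtain $q = 0$, which in turn forces $p = 2$ and hence $n = 2^{p-1}(2q+1) = 2$.  Note that this subcase does not actually use the hypothesis $C(n) = 1$; the constraint comes from the equation itself.

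For the remaining case $i = p+1$, I would invoke the hypothesis $C(n) = 1$: by Theorem~\ref{T:t1} and Lemma~\ref{L:l1}(4), $T^p(m-1) \equiv 2 \pmod 4$, so
\[
T^{p+1}(m-1) = T^p(m-1)/2 = (3^{p-1}(2q+1) - 1)/2.
\]
Setting this equal to $1$ gives $3^{p-1}(2q+1) = 3$, whose only solutions with $p \ge 1$ and $q \ge 0$ are $(p,q) = (2,0)$, giving $n = 2$, and $(p,q) = (1,1)$, giving $n = 3$.  The only genuinely substantive step in the whole argument is this use of Theorem~\ref{T:t1} at $i = p+1$: it pins down the parity of $T^p(m-1)/2$ and yields the clean formula above, without which $T^{p+1}(m-1)$ could instead take the form $(3 T^p(m-1)/2 + 1)/2$ and the solution set could expand.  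Everything else is routine case analysis on very small equations.
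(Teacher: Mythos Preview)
Your argument is correct and follows exactly the same case split as the paper: treat $1 \le i \le p$ via the explicit formula from Lemma~\ref{L:l1}(2), then treat $i = p+1$ by computing one further iterate, and verify the reverse direction by direct calculation.

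One point of overreach: your appeal to Theorem~\ref{T:t1} at $i = p+1$ is unnecessary, and your claim that this is ``the only genuinely substantive step'' is mistaken.  The formula $T^p(m-1) = 3^{p-1}(2q+1) - 1$ already shows that $T^p(m-1)$ is even (it is an odd number minus $1$), so $T^{p+1}(m-1) = T^p(m-1)/2$ holds automatically, with no need for the hypothesis $C(n)=1$.  Your worry that $T^{p+1}(m-1)$ might otherwise take the form $(3\,T^p(m-1)/2 + 1)/2$ confuses levels: that expression would be $T^{p+2}(m-1)$ in the case where $T^p(m-1)/2$ is odd, not $T^{p+1}(m-1)$.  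The paper's proof simply writes $T^{p+1}(m-1) = (3^{p-1}(2q+1)-1)/2$ and proceeds; the hypothesis $C(n)=1$ plays no role in the forward direction.
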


\begin{proof}
If $n = 2$, then $m = 3$ and $p = 2$, and so $T(m - 1) = T(2) = 1$.  If $n = 3$, then
$m = 5$ and $p = 1$, and we have $T^2(m - 1) = T^2(4) = 1$.

For the converse, assume that $T^i(m - 1) = 1$ with $1 \le i \le p + 1$.  If $1 \le i \le
p$, then by Lemma~\ref{L:l1}(2) we have $3^{i-1} \cdot 2^{p-i} \cdot (2q + 1) - 1 =
T^i(m - 1) = 1$, and so $3^{i-1} \cdot 2^{p-i} \cdot (2q + 1) = 2$.  It follows that
$3^{i-1} = 1$, $2^{p-i} = 2$, and $2q + 1 = 1$, so that $i = 1$, $p = 2$, and $q = 0$.
This gives $m = 3$ and $n = 2$.  If $i = p + 1$, then $(3^{p-1} \cdot (2q + 1) - 1) / 2 =
T^{p+1}(m - 1) = 1$, and so $3^{p-1} \cdot (2q + 1) = 3$.  There are two cases to consider:
(1) $p = 1$ and $q = 1$ and (2) $p = 2$ and $q = 0$.  In the first case we have $m = 5$ and
$n = 3$, while in the second case we have $m = 3$ and $n = 2$ once again.  (This happens
because $T^3(2) = T(2) = 1$.)  This completes the proof.
\end{proof}

\begin{corollary}\label{C:c2}
If $n \ge 4$ and $C(n) = 1$, then $m$ and $m - 1$ form a cluster, or
equivalently, $\sigma_{\infty}(2n - 2) = \sigma_{\infty}(2n - 1)$.
\end{corollary}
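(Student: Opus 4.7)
The plan is to combine Theorem~\ref{T:t2} with Lemma~\ref{L:l2} and the explicit iterate formulas of Lemma~\ref{L:l1}(1) to show that the trajectories of $m$ and $m - 1$ not only coincide at step $p + 2$ but also avoid the value $1$ at every earlier step.  Once this is in hand, the common value $v = T^{p+2}(m) = T^{p+2}(m - 1)$ governs both total stopping times: either $v \in F$ and $\sigma_{\infty}(m) = p + 2 + \sigma_{\infty}(v) = \sigma_{\infty}(m - 1)$, or $v \notin F$ and both stopping times are $\infty$.

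First I would invoke Theorem~\ref{T:t2} to obtain $T^{p+2}(m) = T^{p+2}(m - 1)$ from $C(n) = 1$.  Next I would rule out early occurrences of $1$ on the $(m - 1)$-trajectory: $T^0(m - 1) = 2n - 2 \ge 6$, while for $1 \le i \le p + 1$ Lemma~\ref{L:l2} asserts that $T^i(m - 1) = 1$ forces $n = 2$ or $n = 3$, which is excluded by the hypothesis $n \ge 4$.

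For the $m$-trajectory, Lemma~\ref{L:l1}(1) gives $T^i(m) = 3^i \cdot 2^{p-i} \cdot (2q + 1) - 1$ for $0 \le i \le p$.  Setting this equal to $1$ requires $3^i \cdot 2^{p-i} \cdot (2q + 1) = 2$; when $i \ge 1$ the left side is at least $3$, and when $i = 0$ it forces $p = 1$ and $q = 0$, hence $n = 1$, outside the range $n \ge 4$.  For the remaining index $i = p + 1$, the hypothesis $C(n) = 1$ combined with Theorem~\ref{T:t1} gives $T^p(m) \equiv 0 \pmod 4$, so $T^{p+1}(m) = T^p(m) / 2$ is even and in particular $\ne 1$.

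The substantive work has already been done upstream: Theorem~\ref{T:t2} supplies the coincidence at step $p + 2$, and Lemma~\ref{L:l2} isolates exactly the small-$n$ exceptions where either trajectory would ``bottom out'' before the synchronization step.  What I expect to be the only mild subtlety inside this corollary itself is the index $i = p + 1$ for the $m$-trajectory, since Lemma~\ref{L:l1}(1) controls $T^i(m)$ only up to $i = p$; here the parity information from Theorem~\ref{T:t1} fills the gap cleanly.
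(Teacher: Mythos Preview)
Your proposal is correct and follows essentially the same approach as the paper: invoke Theorem~\ref{T:t2} for the coincidence at step $p+2$, use Lemma~\ref{L:l2} to rule out early $1$'s along the $(m-1)$-trajectory, use Lemma~\ref{L:l1}(1) for the $m$-trajectory up to step $p$, and fill in step $p+1$ via Theorem~\ref{T:t1}. The only cosmetic difference is that the paper phrases the $m$-side argument for $0 \le i \le p$ as ``the iterates form a strictly increasing sequence,'' whereas you solve $T^i(m) = 1$ directly; these are equivalent.
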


\begin{proof}
Let $m_{1}$ be the common value of $T^{p+2}(m - 1)$ and $T^{p+2}(m)$ according to Theorem~\ref{T:t2}.
If $\sigma_{\infty} (m_{1})$ is infinity, then $\sigma_{\infty}(m - 1)$ and $\sigma_{\infty}(m)$
must both be infinity as well.  So suppose that $\sigma_{\infty}(m_{1})$ is finite.  Since $n \ge 4$,
$m - 1 \ge 6$, and by Lemma~\ref{L:l2} $T^i(m - 1) > 1$ for all $i$ with $1 \le i \le p + 1$.
So $m_{1} = T^{p+2}(m - 1) \ge 1$, and it follows that $\sigma_{\infty}(m - 1) =
\sigma_{\infty}(m_{1}) + p + 2$.  By Lemma~\ref{L:l1}(1) $m$ and its iterates $T^i(m)$ form
a strictly increasing sequence, and so $T^i(m) > 1$ for $0 \le i \le p$.  Since $4 |
T^p(m)$ by Theorem~\ref{T:t1}, $T^{p+1}(m) > 1$ and $T^{p+2}(m) \ge 1$.  Thus
$\sigma_{\infty}(m) = \sigma_{\infty}(m_{1}) + p + 2 = \sigma_{\infty}(m - 1)$ so that $m$
and $m - 1$ form a cluster.
\end{proof}

\begin{remark}\label{R:r1}
The converse of Corollary~\ref{C:c2} is false.  Consecutive even and odd integers
may form a cluster when $C(n) = 0$.  This happens frequently with larger values of $n$.
The smallest value of $n$ for which this happens is $n = 121$: $\sigma_{\infty}(240) =
\sigma_{\infty}(241) = 16$, but $C(121) = C(1) = 0$.

In this case $p = 1$, but $T^3(240) = 30 \neq 272 = T^3(241)$, and so the trajectories
fail to coincide after $p + 2$ steps.  Indeed, $T^i(240) \neq T^i(241)$ for
$1 \leq i \leq 9$, and $T^{10}(240) = 20 =  T^{10}(241)$.
\end{remark}

\section{Further Consequences}\label{S:conseq}
We now show how Corollary~\ref{C:c2} implies two standard results about total stopping
times, and we then use it to derive several new ones.

\begin{corollary}\label{C:c3}
$\sigma_{\infty}(8i + 4) = \sigma_{\infty}(8i + 5)$ for all $i \ge 1$.
\end{corollary}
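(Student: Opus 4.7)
The plan is to realize this equality as a direct instance of Corollary~\ref{C:c2}. Set $m = 8i+5$ and $m-1 = 8i+4$, so that $m$ is odd and the pair $(m-1,m)$ is exactly of the form considered in Section~\ref{S:clusters}. Writing $m = 2n-1$ forces $n = 4i+3$, which is odd and satisfies $n \ge 7$ whenever $i \ge 1$; in particular the restriction $n \ge 4$ in Corollary~\ref{C:c2} is met.

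The only remaining task is to verify that $C(n) = 1$ for $n = 4i+3$. Since $n$ is odd, I can apply Proposition~\ref{P:p1}(2) with $t = i$ and $u = 3$ to conclude $C(4i+3) = C(3)$. The value $C(3) = 1$ was computed immediately after the definition of $C$ in Section~\ref{S:defs}, so $C(n) = 1$ unconditionally in $i$.

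With both hypotheses of Corollary~\ref{C:c2} in hand, I invoke it to conclude $\sigma_{\infty}(2n-2) = \sigma_{\infty}(2n-1)$, which is exactly $\sigma_{\infty}(8i+4) = \sigma_{\infty}(8i+5)$.

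There is no real obstacle here; the whole argument is the bookkeeping of matching $8i+5$ against the template $m = 2n-1$ and observing that the residue $u = 3$ forces $C(n) = 1$ independently of $i$. It may be worth noting in passing that for this family $m+1 = 2(4i+3)$ gives $p = 1$, so the trajectories of $8i+4$ and $8i+5$ coincide already after $p+2 = 3$ steps, which is the quickest possible coincidence predicted by Theorem~\ref{T:t2}.
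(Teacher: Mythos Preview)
Your proof is correct and follows exactly the paper's approach: set $n = 4i+3$, use Proposition~\ref{P:p1}(2) to get $C(n) = C(3) = 1$, and apply Corollary~\ref{C:c2}. The extra remark about $p = 1$ and coincidence after three steps is accurate and a nice addition, though not needed for the argument.
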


\begin{proof}
Let $n = 4i + 3$, where $i \ge 1$.  Then by Proposition~\ref{P:p1}(2) $C(n) = C(3) = 1$.
Since $2n - 2 = 8i + 4$, $\sigma_{\infty}(8i + 4) = \sigma_{\infty}(8i + 5)$ by Corollary~
\ref{C:c2}.
\end{proof}

\begin{corollary}\label{C:c4}
$\sigma_{\infty}(16i + 2) = \sigma_{\infty}(16i + 3)$ for all $i \ge 1$.
\end{corollary}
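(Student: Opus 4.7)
The plan is to apply Corollary~\ref{C:c2} in exactly the same way that Corollary~\ref{C:c3} was just established, so the task reduces to identifying the right value of $n$ and verifying that $C(n) = 1$ on the arithmetic progression of interest.

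First I would set $m = 16i + 3$ and $m - 1 = 16i + 2$, and solve $m = 2n - 1$ to obtain $n = 8i + 2$. Since $i \ge 1$, we have $n \ge 10 \ge 4$, so the hypothesis $n \ge 4$ in Corollary~\ref{C:c2} is satisfied automatically and no special small cases need to be handled separately.

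Next I would compute $C(n)$ using the propositions of Section~\ref{S:defs}. Writing $n = 8i + 2 = 2 \cdot (4i + 1)$ exhibits $n$ in the form $2^j \cdot k$ with $j = 1$ and $k = 4i + 1$ odd. By Proposition~\ref{P:p2}(2), since $j = 1$ is odd, $C(n) = 1 - C(4i + 1)$. Then Proposition~\ref{P:p1}(2), applied with $u = 1$, gives $C(4i + 1) = C(1) = 0$, so $C(n) = 1$.

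With $C(n) = 1$ and $n \ge 4$, Corollary~\ref{C:c2} immediately yields $\sigma_{\infty}(2n - 2) = \sigma_{\infty}(2n - 1)$, i.e., $\sigma_{\infty}(16i + 2) = \sigma_{\infty}(16i + 3)$, as desired. There is essentially no obstacle here: the only thing to be careful about is choosing the correct 2-adic decomposition of $n$ and invoking the right clause of Proposition~\ref{P:p2}, since picking $j = 0$ with $k = 8i + 2$ would be wrong ($k$ must be odd).
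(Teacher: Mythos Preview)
Your proof is correct and follows essentially the same route as the paper: set $n = 8i + 2$, compute $C(n) = 1 - C(4i+1) = 1 - C(1) = 1$, and invoke Corollary~\ref{C:c2}. Your version is slightly more explicit in citing Propositions~\ref{P:p2}(2) and~\ref{P:p1}(2) for the evaluation of $C(n)$, but the argument is identical.
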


\begin{proof}
Put $n = 8i + 2$, where $i \ge 1$.  Then $C(n) = 1 - C(4i + 1) = 1 - C(1) = 1$.  Since
$2n - 2 = 16i + 2$, Corollary~\ref{C:c2} tells us that $\sigma_{\infty}(16i + 2) =
\sigma_{\infty}(16i + 3)$.
\end{proof}

\begin{remark}\label{R:r2}
Theorem 1 of Garner \cite{lG1985} is another consequence of Corollary~\ref{C:c2}.  To see this,
we observe that the value of the even integer $n$ in Garner's Theorem 1 corresponds to a value of
$2n_1(i) - 2$ in our Corollary~\ref{C:c2}.  Therefore
\[
n_1(i) = 2^{(i+2)} m + 2^{(i+1)} + (-1)^i 2^i = 2^i (4m + 2 + (-1)^i).
\]
Since $C(n_1(i)) = 1$ when $i = 0$ and $i = 1$ and $n_1(i + 2) = 4n_1(i)$, it follows that $C(n_1(i)) = 1$
for all $i \ge 0$ and all $m \ge 0$.  Thus our Corollary~\ref{C:c2} implies Garner's Theorem 1.
\end{remark}

We turn next to some other novel consequences of Corollary~\ref{C:c2}.

\begin{corollary}\label{C:c5}
If $i \ge 1$ and $C(i) = 1$, then $\sigma_{\infty}(8i - 2) = \sigma_{\infty}(8i - 1)$.
\end{corollary}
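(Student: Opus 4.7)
The plan is to reduce this directly to Corollary~\ref{C:c2} by choosing the right value of $n$. Corollary~\ref{C:c2} asserts that whenever $n \ge 4$ and $C(n) = 1$, the consecutive integers $2n-2$ and $2n-1$ form a cluster. So I want to match $2n - 2 = 8i - 2$, which forces the choice $n = 4i$.

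With this choice, the hypothesis $i \ge 1$ gives $n = 4i \ge 4$, which takes care of the size condition needed for Corollary~\ref{C:c2}. It remains to verify that $C(n) = C(4i) = 1$. For this I invoke Proposition~\ref{P:p1}(1), which states $C(4i) = C(i)$; combined with the hypothesis $C(i) = 1$, this immediately yields $C(4i) = 1$.

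Having established both preconditions, I apply Corollary~\ref{C:c2} with $n = 4i$ to conclude $\sigma_\infty(2n - 2) = \sigma_\infty(2n - 1)$, i.e., $\sigma_\infty(8i - 2) = \sigma_\infty(8i - 1)$.

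There is no real obstacle here; the result is essentially a substitution exercise, with the only nontrivial ingredient being the identity $C(4i) = C(i)$ from Proposition~\ref{P:p1}(1). The proof will be only a few lines long.
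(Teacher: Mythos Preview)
Your proposal is correct and matches the paper's proof essentially line for line: set $n = 4i$, use $C(4i) = C(i) = 1$ (the paper invokes this identity directly, while you cite Proposition~\ref{P:p1}(1)), note $n \ge 4$, and apply Corollary~\ref{C:c2}.
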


\begin{proof}
Set $n = 4i$, where $i \ge 1$ and $C(i) = 1$.  Then $C(n) = C(i) = 1$, and $2n - 2 =
8i - 2$ with $n \ge 4$.  Hence by Corollary~\ref{C:c2} we have $\sigma_{\infty}(8i - 2) =
\sigma_{\infty}(8i - 1)$.
\end{proof}

\begin{corollary}\label{C:c6}
If $i \ge 2$ and $C(i) = 0$, then $\sigma_{\infty}(4i - 2) = \sigma_{\infty}(4i - 1)$.
\end{corollary}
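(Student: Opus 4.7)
The plan is to mimic the template used in the proof of Corollary~\ref{C:c5}: pick an $n$ so that the pair $(2n-2,\,2n-1)$ matches the target pair $(4i-2,\,4i-1)$, verify the two hypotheses of Corollary~\ref{C:c2} (namely $n \ge 4$ and $C(n)=1$), and then invoke that corollary directly. Matching $2n - 2 = 4i - 2$ forces the choice $n = 2i$.

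With $n = 2i$, the hypothesis $i \ge 2$ translates immediately into the size requirement $n \ge 4$. For the value of $C(n)$, Proposition~\ref{P:p1}(1) gives $C(2i) = 1 - C(i)$, and the assumption $C(i) = 0$ then produces $C(n) = 1$. Thus both conditions in the statement of Corollary~\ref{C:c2} hold.

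Applying Corollary~\ref{C:c2} to this $n$ yields $\sigma_{\infty}(2n-2) = \sigma_{\infty}(2n-1)$, which, after substituting $n = 2i$, is exactly $\sigma_{\infty}(4i-2) = \sigma_{\infty}(4i-1)$. There is no real obstacle in this argument; its whole content is the observation that doubling $i$ both flips the value of $C$ (turning a known $0$ into the needed $1$) and scales the index pair into the form $4i - 2,\,4i - 1$. The only thing worth double-checking is that $i \ge 2$ gives precisely the needed $n \ge 4$, not a strictly stronger constraint.
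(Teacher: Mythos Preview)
Your proof is correct and is essentially identical to the paper's own argument: set $n = 2i$, check $n \ge 4$ and $C(n) = 1 - C(i) = 1$, and apply Corollary~\ref{C:c2}. The only cosmetic difference is that you cite Proposition~\ref{P:p1}(1) for $C(2i) = 1 - C(i)$, whereas the paper uses this identity without naming it.
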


\begin{proof}
Let $n = 2i$, where $i \ge 2$ and $C(i) = 0$.  Now $2n - 2 = 4i - 2$ with $n \ge 4$, and
$C(n) = 1 - C(i) = 1$.  Once again Corollary~\ref{C:c2} yields the desired conclusion.
\end{proof}

\begin{corollary}\label{C:c7}
If $i \ge 2$ and $C(i) = 0$, then $\sigma_{\infty}(8i - 4) = \sigma_{\infty}(8i - 3) =
\sigma_{\infty}(8i - 2)$.
\end{corollary}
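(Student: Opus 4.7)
The plan is to split the required triple equality into two independent equalities, each of which reduces to a single previous result: $\sigma_{\infty}(8i-4)=\sigma_{\infty}(8i-3)$ from Corollary~\ref{C:c2} and $\sigma_{\infty}(8i-4)=\sigma_{\infty}(8i-2)$ from Corollary~\ref{C:c6}. Chaining these two will give the claimed cluster. Only the second equality uses the hypothesis $C(i)=0$; the first is actually unconditional on $i$.

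For the first equality I would apply Corollary~\ref{C:c2} with $n=4i-1$. Since $i\ge 2$, we have $n\ge 7\ge 4$. To verify $C(n)=1$, I write $4i-1=4(i-1)+3$, so that in the notation of Proposition~\ref{P:p1}(2) we have $u=3$, yielding $C(4i-1)=C(3)=1$. Then Corollary~\ref{C:c2} gives $\sigma_{\infty}(2n-2)=\sigma_{\infty}(2n-1)$, that is, $\sigma_{\infty}(8i-4)=\sigma_{\infty}(8i-3)$.

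For the second equality I would use that both $8i-4$ and $8i-2$ are even, so $T(8i-4)=4i-2$ and $T(8i-2)=4i-1$. Consequently
\[
\sigma_{\infty}(8i-4)=\sigma_{\infty}(4i-2)+1,\qquad \sigma_{\infty}(8i-2)=\sigma_{\infty}(4i-1)+1.
\]
Now Corollary~\ref{C:c6}, whose hypothesis $i\ge 2$ and $C(i)=0$ is precisely what we are assuming, gives $\sigma_{\infty}(4i-2)=\sigma_{\infty}(4i-1)$. Adding $1$ to both sides yields $\sigma_{\infty}(8i-4)=\sigma_{\infty}(8i-2)$.

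Combining the two displayed equalities gives $\sigma_{\infty}(8i-3)=\sigma_{\infty}(8i-4)=\sigma_{\infty}(8i-2)$, as required. There is no real obstacle here: the only subtlety is recognizing where the hypothesis $C(i)=0$ is needed (namely, to invoke Corollary~\ref{C:c6} for the outer pair), while the inner pair's equality is supplied freely by the fact that $C(4i-1)$ is identically $1$.
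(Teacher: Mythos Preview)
Your proof is correct and follows essentially the same route as the paper's: you split the triple into the pair $(8i-4,8i-3)$ handled via $C(4i-1)=1$ (which is exactly Corollary~\ref{C:c3} with a shift of index) and the pair $(8i-4,8i-2)$ handled by Corollary~\ref{C:c6} together with the doubling relation $\sigma_\infty(2k)=\sigma_\infty(k)+1$. The paper cites Corollary~\ref{C:c3} directly and leaves the doubling step implicit, whereas you unfold both; the logical structure is the same.
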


\begin{proof}
By Corollary~\ref{C:c6} we have $\sigma_{\infty}(4i - 2) = \sigma_{\infty}(4i - 1)$, and so
$\sigma_{\infty}(8i - 4) = \sigma_{\infty}(8i - 2)$.  Now from Corollary~\ref{C:c3} we
obtain $\sigma_{\infty}(8i - 4) = \sigma_{\infty}(8i - 3)$, and the result is proved.
\end{proof}

\begin{remark}\label{R:r3}
Theorem 2 of Garner is also a consequence of our results.  We can prove this by explicitly
writing out the integers in Tables 2 and 3 (\cite{lG1985}, pp.~60-61) and then applying
our corollaries.  Considering Table 2, we see that the rule for $n$ is given by
$n = 8i(j) - 3$, where
\[
i(j) = 2^{(j+2)} m + 2^j(2 + (-1)^{(j+1)}) = 2^j(4m + 2 + (-1)^{(j+1)})
\]
for all $j \ge 0$ and all $m \ge 0$.  Since $C(i(j)) = 0$ when $j = 0$ and $j = 1$ and
$i(j + 2) = 4i(j)$, it follows that $C(i(j))) = 0$ for all $j \ge 0$ and all $m \ge 0$.
Since $i(j) \ge 2$ whenever $j \ge 0$, $m \ge 0$, and $(j,m) \neq (0,0)$, our Corollary~\ref{C:c7}
implies the first half of Garner's Theorem 2.  (We excluded the case in which $(j,m) = (0,0)$
because $\sigma_{\infty}(5) \neq \sigma_{\infty}(6)$.) 
\end{remark}

To complete the proof of Garner's Theorem 2 we require one other corollary.

\begin{corollary}\label{C:c8}
If $i \ge 1$ and $C(9i + 6) = 1$, then $\sigma_{\infty}(32i + 17) = \sigma_{\infty}(32i + 18)$.
\end{corollary}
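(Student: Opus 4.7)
The plan is to reduce the pair $(32i+17, 32i+18)$ to a pair of the form $(2n-2, 2n-1)$ handled by Corollary~\ref{C:c2}, by iterating $T$ four times on each. First I would compute, by direct application of the definition of $T$, that starting from the odd value $32i+17$ the successive iterates are $48i+26$, $24i+13$, $36i+20$, $18i+10$, while starting from the even value $32i+18$ the iterates are $16i+9$, $24i+14$, $12i+7$, $18i+11$. The parity at each step is forced (for every integer $i$, $32i+17$, $24i+13$, $16i+9$, $12i+7$ are odd and $48i+26$, $36i+20$, $32i+18$, $24i+14$ are even), so these are mechanical calculations.

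In particular $T^4(32i+17) = 18i+10$ and $T^4(32i+18) = 18i+11$, which is exactly the pair $(2n-2, 2n-1)$ with $n = 9i+6$. Since $i \ge 1$ forces $n = 9i+6 \ge 15 \ge 4$, the hypothesis $n \ge 4$ of Corollary~\ref{C:c2} is satisfied, and the hypothesis $C(9i+6) = 1$ is supplied by assumption. Corollary~\ref{C:c2} therefore yields $\sigma_{\infty}(18i+10) = \sigma_{\infty}(18i+11)$.

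It only remains to lift this equality back through the four preliminary steps of $T$. Every intermediate value listed in the two trajectories above exceeds $1$ for $i \ge 1$, so neither trajectory can reach $1$ before step $4$. Hence $\sigma_{\infty}(32i+17) = 4 + \sigma_{\infty}(18i+10)$ and $\sigma_{\infty}(32i+18) = 4 + \sigma_{\infty}(18i+11)$, with the convention that $\infty + 4 = \infty$ covering the case when the common stopping time is infinite. Combining these three equalities gives the desired $\sigma_{\infty}(32i+17) = \sigma_{\infty}(32i+18)$.

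There is no real obstacle once the four iterates have been written down; this corollary is essentially just Corollary~\ref{C:c2} applied to $n = 9i+6$ after shepherding the two trajectories through four explicit preliminary steps of $T$. The only place one must be a little careful is the bookkeeping in the last paragraph, verifying that no intermediate iterate accidentally equals $1$, so that the additive relation $\sigma_{\infty}(x) = \sigma_{\infty}(T^4(x)) + 4$ genuinely holds for $x \in \{32i+17, 32i+18\}$ with $i \ge 1$.
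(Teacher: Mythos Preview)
Your proof is correct and follows essentially the same route as the paper's: compute the four iterates of $32i+17$ and $32i+18$ explicitly, arrive at the consecutive pair $18i+10$, $18i+11$, and apply Corollary~\ref{C:c2} with $n=9i+6$. Your write-up is in fact slightly more careful than the paper's in explicitly verifying that no intermediate iterate equals $1$ and in handling the infinite-stopping-time case.
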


\begin{proof}
Let $m = 32i + 17$.  Then we calculate $T(m) = 48i + 26$, $T^2(m) = 24i + 13$, $T^3(m) = 36i + 20$,
and $T^4(m) = 18i + 10$, while $T(m + 1) = 16i + 9$, $T^2(m + 1) = 24i + 14$, $T^3(m + 1) = 12i + 7$,
and $T^4(m + 1) = 18i + 11$.  Since $18i + 10 = 2n - 2$, where $n = 9i + 6 \ge 15$, and by hypothesis
$C(n) = C(9i + 6) = 1$, Corollary~\ref{C:c2} implies that
$\sigma_{\infty}(18i + 10) = \sigma_{\infty}(18i + 11)$.  Hence
$\sigma_{\infty}(m) = \sigma_{\infty}(18i + 10) + 4 = \sigma_{\infty}(18i + 11) + 4 = \sigma_{\infty}(m + 1)$.
\end{proof}

\begin{remark}\label{R:r4}
Returning now to Table 3 of Garner's paper \cite{lG1985}, we see that the rule for $n$ is given by
$n = 32i(j) + 17$, where
\[
i(j) = 2^{(j-1)}(8m + (3 + (-1)^{(j+1)})) + (1/3) \cdot (2^{(j-1)}(3 + (-1)^j) - 2)
\]
for all $j \ge 0$ and all $m \ge 0$.  Therefore
\[
9i(j) + 6 = 2^{(j-1)}(72m + 36 + (-1)^{(j+1)} \cdot 6).
\]
Now when $j = 0$, we have $9i(j) + 6 = 36m + 15$ so that $C(9i(j) + 6) = C(36m + 15) = 1$,
and when $j = 1$, we have $9i(j) + 6 = 72m + 42$ so that $C(9i(j) + 6) = 1 - C(36m + 21) = 1$.
Since $9i(j + 2) + 6 = 4 \cdot (9i(j) + 6)$, it follows that $C(9i(j) + 6) = 1$
for all $j \ge 0$ and all $m \ge 0$.  So our Corollary~\ref{C:c8} implies the second half of
Garner's Theorem 2.
\end{remark}

\section{Acknowledgements}\label{S:ack}
The author would like to dedicate this paper to the memory of George Cain, Professor
Emeritus in the School of Mathematics at the Georgia Institute of Technology, who always
had ``all the time in the world" for his students.

The author would also like to thank Jeffrey Lagarias for generously supplying helpful
references to previous work on total stopping times and heights in the 3X + 1 Problem.

\medskip

\noindent MSC2010: 11B37, 11B83, 11B50

\end{document}